\newtheorem{theorem}{Theorem}
\newtheorem{corollary}{Corollary}
\title{Hankel weighing matrices}
\author{Goldwyn Millar\\ School of Mathematics and Statistics\\ Carleton University\\ Ottawa, ON\\} 
\date{December 8, 2013} 
\begin{document}
\begin{abstract} We prove a classification theorem for Hankel weighing matrices.\\ \\Keywords: circulant weighing matrix, negacyclic weighing matrix, Hankel weighing matrix, quantum error-correcting codes
\end{abstract}
\maketitle
Let $n,k \in \mathbb{N}.$ A \textit{weighing matrix of order $n$ and weight $k$} (a $W(n,k)$) is a $n \times n$ matrix $A$ with $0,\pm 1$ entries and such that $AA^T = kI.$ A Hankel matrix is a matrix whose skew diagonals are constant. One can obtain a weighing matrix whose diagonals are constant from a weighing matrix whose skew diagonals are constant (and vice versa) by inverting the order of the rows of the matrix (since orthogonality of rows is invariant under row permutation). It will be convenient for us to define a Hankel weighing matrix as a weighing matrix whose diagonals are constant. So, to be precise, we stipulate that a $n \times n$ weighing matrix $A = [a_{ij}]$ is a \textit{Hankel weighing matrix} (a $HW(n,k)$) if for each entry $a_{ij}$ of $A$ such that $2 \leq i,j \leq n,$ $a_{ij} = a_{i - 1,j - 1}.$ Recently, it has been suggested that Hankel weighing matrices might have applications in the theory of quantum error-correcting codes [see \cite{F1} and \cite{F2}]. \\
 A \textit{circulant matrix} is a matrix such that each of its rows, after the first, can be obtained from the row above it by a right cyclic shift. Much is known about circulant weighing matrices ($CW(n,k)$'s) [see, for instance, the surveys \cite{A1} and \cite{M}]. The authors of \cite{F1} remark that although circulant weighing matrices are Hankel weighing matrices, there are Hankel weighing matrices that are not circulant weighing matrices, so that the problem of classifying Hankel weighing matrices does not reduce entirely to the problem of classifying circulant weighing matrices.\\
A \textit{negacyclic matrix} is a matrix that can be written as a polynomial in the matrix $Y$ given below:
   \[Y = \left(\begin{matrix} 
      0  &  1  &  0  &  \cdot  &  \cdot  &  \cdot  &  0 \\
      0  &  0  &  1  &  0  &  \cdot  &  \cdot  \\
         &   &   &\cdot\\
         &  &  &\cdot\\
         &  &  &\cdot & & & 1\\
      -1 &  0 &  0  &  \cdot  &  \cdot  &  \cdot   &  0\\
   \end{matrix}\right).\]
Negacyclic weighing matrices ($NW(n,k)$'s) are not as well studied as $CW(n,k)$'s, but several papers have been written about them over the years [see \cite{D}, \cite{B}, and \cite{A2}]. Clearly, $NW(n,k)$'s are $HW(n,k)$'s.\\
It turns out that with respect to $HW(n,k)$'s, the obvious sufficient conditions are also necessary. 
\begin{theorem} Every $HW(n,k)$ is either a $CW(n,k)$ or a $NW(n,k).$
\end{theorem}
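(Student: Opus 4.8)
\section*{Proof proposal}

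The plan is to work directly with the Toeplitz structure. Write $A = [a_{ij}]$ with $a_{ij} = c_{j-i}$, where $c_{-(n-1)}, \ldots, c_{-1}, c_0, c_1, \ldots, c_{n-1}$ is a sequence with entries in $\{0, \pm 1\}$; thus $(c_0, \ldots, c_{n-1})$ is the first row of $A$ and $(c_0, c_{-1}, \ldots, c_{-(n-1)})$ its first column. The entries of $AA^T$ are then correlation sums of this sequence: a short computation gives $(AA^T)_{i,\,i+d} = \sum_{p=1-i}^{n-i} c_p\, c_{p-d}$. I would extract what is needed from $AA^T = kI$ by differencing the equations for consecutive rows. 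From the diagonal equations $(AA^T)_{ii} = (AA^T)_{i+1,i+1} = k$ one obtains $c_{n-i}^2 = c_{-i}^2$, that is $|c_{m-n}| = |c_m|$ for every $m \in \{1, \ldots, n-1\}$. Hence on the support $T := \{m \in \{1, \ldots, n-1\} : c_m \neq 0\}$ there is a well-defined sign $\eta_m \in \{+1, -1\}$ with $c_{m-n} = \eta_m\, c_m$, while when $c_m = 0$ automatically $c_{m-n} = 0$ as well.

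The crux is to prove that the signs $\eta_m$, $m \in T$, all coincide. For this I would use the off-diagonal equations $(AA^T)_{i,\,i+d} = 0$: differencing those for rows $i$ and $i+1$, with $d$ fixed and $1 \le d \le n-2$, yields $c_m\, c_{m-d} = c_{m-n}\, c_{m-n-d}$ for $m \in \{d+1, \ldots, n-1\}$. Substituting $c_{m-n} = \eta_m c_m$ and $c_{m-n-d} = \eta_{m-d} c_{m-d}$ rewrites this as $c_m\, c_{m-d}\,(1 - \eta_m \eta_{m-d}) = 0$, so $\eta_m = \eta_{m-d}$ whenever $c_m$ and $c_{m-d}$ are both nonzero. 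Since any two elements $m > m'$ of $T$ lie in $\{1, \ldots, n-1\}$, the choice $d = m - m'$ is always admissible, and so $\eta_m = \eta_{m'}$. Thus there is a single constant $\eta \in \{+1, -1\}$ with $c_{m-n} = \eta\, c_m$ for all $m \in \{1, \ldots, n-1\}$ (if $T = \emptyset$ take $\eta = 1$; then $A = c_0 I$ is plainly circulant).

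It then remains only to identify $A$. With $c_{m-n} = \eta\, c_m$ throughout, $a_{ij} = c_{j-i}$ equals $c_{(j-i)\bmod n}$ when $j \ge i$ and $\eta\, c_{(j-i)\bmod n}$ when $j < i$. If $\eta = +1$ this is precisely the circulant matrix with first row $(c_0, \ldots, c_{n-1})$, so $A$ is a circulant weighing matrix, hence a $CW(n,k)$. If $\eta = -1$ it is the negacyclic matrix with that first row: computing $Y^t$ shows it carries $1$'s on the $t$-th superdiagonal and $-1$'s on the $(n-t)$-th subdiagonal, so that any polynomial in $Y$ is a Toeplitz matrix whose diagonals satisfy exactly $c_{m-n} = -c_m$, and conversely; hence $A$ is a negacyclic weighing matrix, i.e.\ an $NW(n,k)$.

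The conceptual content is short: a single sign decides whether the first row's wrap-around is cyclic or negacyclic, and orthogonality of the rows forces that sign to be uniform across the support $T$. I expect the only real friction to be bookkeeping --- keeping the ranges of $i$, $m$ and $d$ straight through the differencing steps, and nailing down the correspondence between the ``polynomial in $Y$'' definition of a negacyclic matrix and the Toeplitz sign pattern $c_{m-n} = -c_m$. The degenerate cases (notably $T = \emptyset$, which forces $A = c_0 I$) cause no trouble and can be disposed of in a line apiece.
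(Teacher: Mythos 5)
Your proposal is correct and follows essentially the same route as the paper: differencing the diagonal Gram equations of consecutive rows to match the magnitudes of the wrap-around entries, then differencing the off-diagonal orthogonality equations to force a single sign $\eta$ across the support, which is exactly the paper's $\epsilon_i\epsilon_t=1$ argument phrased in terms of the Toeplitz symbol $c_{j-i}$. Your added verification that the sign pattern $c_{m-n}=-c_m$ characterizes polynomials in $Y$ is a detail the paper leaves implicit, but the substance is the same.
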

\begin{proof} Let $A$ be a $HW(n,k)$. Since there are $k$ nonzero entries in each row of A, we have that for each  $2 \leq i \leq n,$ $\displaystyle{\sum_j |a_{ij}| = \sum_j |a_{i - 1,j}|}$. So, $|a_{i,1}| = |a_{i-1,n}|$, i.e. $a_{i - 1,n} = \epsilon_i a_{i1},$ where $\epsilon_i = \pm 1.$ Since the dot product of any pair of rows is equal to zero, we have that for any $2 \leq i,t \leq n$,  $\displaystyle{\sum_j a_{i,j}a_{t,j} = \sum_j a_{i-1,j}a_{t-1,j}}.$ Thus, since $A$ is a Hankel matrix, $a_{i,1}a_{t,1} = a_{i-1,n}a_{t-1,n} = \epsilon_i a_{i,1} \epsilon_t a_{t,1}.$ Therefore, when $a_{i,1},a_{t,1} \neq 0,$ $\epsilon_i \epsilon_t = 1.$ Since $i$ and $t$ were chosen arbitrarily, it follows that either $a_{i,1} = a_{i-1,n}$ for each $2\leq i \leq n$, or $a_{i,1} = -a_{i-1,n}$ for each $2 \leq i \leq n.$ Hence, $A$ is either a $CW(n,k)$ or a $NW(n,k).$ 
\end{proof}
The above theorem and the applications of Hankel weighing matrices alluded to earlier provide incentive for further research into the existence/non-existence of negacyclic weighing matrices. For now, we are content to deduce an easy corollary.\\
A conference matrix is a weighing matrix of order $n$ and weight $n-1.$ It is well known that for $n>2,$ there exist no circulant conference matrices of order $n$ \cite{Mul}. Delsarte, Goethals, and Seidal  showed that if $q$ is an odd prime power, then there exists a negacyclic conference matrix of weight $q$ \cite{D}. Furthermore, they conjectured that if $n-1$ is not an odd prime power, then there exists no conference matrix of weight $n-1.$ They were able to prove their conjecture for $n \leq 227.$ It has since been shown that their conjecture is correct for $n \leq 532$ \cite{A3}. 
\begin{corollary} For $2 < n \leq 532,$ there exists a Hankel conference matrix of order $n$ if and only if $n-1$ is an odd prime power.
\end{corollary}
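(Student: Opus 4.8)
The plan is to deduce the statement directly from Theorem 1 together with the three (non-)existence facts recalled just above it, treating the two implications separately.

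For the ``if'' direction, I would suppose $n-1$ is an odd prime power, say $n-1 = q$. By the theorem of Delsarte, Goethals, and Seidel there is a negacyclic conference matrix of weight $q$, i.e.\ an $NW(n,n-1)$. Since every $NW(n,k)$ is an $HW(n,k)$, this matrix is in particular a Hankel conference matrix of order $n$; and because $q \geq 3$ we have $n = q+1 \geq 4 > 2$, so the range hypothesis is automatically satisfied. Note that this direction does not use the bound $n \leq 532$ at all.

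For the ``only if'' direction, I would let $A$ be a Hankel conference matrix of order $n$ with $2 < n \leq 532$, so that $A$ is an $HW(n,n-1)$. Applying Theorem 1, $A$ is either a $CW(n,n-1)$ or an $NW(n,n-1)$. The first case is impossible, since for $n > 2$ there are no circulant conference matrices (Mullin). Hence $A$ is an $NW(n,n-1)$, and in particular a conference matrix of weight $n-1$ exists. Now I would invoke the fact that the Delsarte--Goethals--Seidel conjecture has been verified for $n \leq 532$: if $n-1$ were not an odd prime power, no conference matrix of weight $n-1$ would exist, contradicting the existence of $A$. Therefore $n-1$ is an odd prime power.

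There is essentially no genuine obstacle here: the corollary is an immediate consequence of Theorem 1 combined with the cited results, and the only care required is to match each cited statement to the correct logical direction and to observe that the restriction $n \leq 532$ enters only through the verified range of the Delsarte--Goethals--Seidel conjecture in the ``only if'' half.
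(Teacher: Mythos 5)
Your proposal is correct and is exactly the deduction the paper intends: the corollary is stated without an explicit proof precisely because it follows immediately from Theorem 1 combined with Mullin's nonexistence result for circulant conference matrices, the Delsarte--Goethals--Seidel construction for odd prime power weights, and the verification of their conjecture for $n \leq 532$. You have matched each cited fact to the correct direction of the equivalence, so nothing further is needed.
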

This note was written while the author was a MSc student at the University of Manitoba under the helpful supervision of Dr. R. Craigen. The author became aware of Hankel weighing matrices when Dr. Craigen  informed him of the contents of an e-mail he had received on the subject \cite{S}.

\end{document}